\documentclass[11 pt,reqno]{article}

\usepackage{amsmath}
\usepackage{amstext}
\usepackage{enumerate}
\usepackage{amsfonts}
\usepackage{amscd}
\usepackage{amssymb}
\usepackage{amsthm}
\usepackage{tikz-cd}

\usepackage[top=1in, bottom=1in, left=1.5in, right=1.5in]{geometry}

\usepackage[hidelinks]{hyperref}

\urlstyle{same}

\newcommand{\Hom}{\operatorname{Hom}}

\newcommand{\Spec}{\operatorname{Spec}}
\newcommand{\colim}{\operatorname{colim}}

\newcommand{\Sm}{\operatorname{Sm}}
\newcommand{\CH}{\operatorname{CH}}

\newcommand{\R}{\mathbb{R}}
\newcommand{\F}{\mathbb{F}}
\newcommand{\Z}{\mathbb{Z}}
\newcommand{\bQ}{\mathbb{bQ}}

\newcommand{\A}{\mathbb{A}}

\newcommand{\DM}{\mathcal{DM}}
\newcommand{\SH}{\mathcal{SH}}
\newcommand{\DMT}{\mathcal{DMT}}

\newcommand{\oH}{\operatorname{H}}
\newcommand{\oM}{\operatorname{M}}

\theoremstyle{plain}
  \newtheorem{thm}{Theorem}
  \newtheorem*{thm*}{Theorem}
  \newtheorem{prop}[thm]{Proposition}
  \newtheorem{lem}[thm]{Lemma}
  \newtheorem{cor}[thm]{Corollary}
  \newtheorem*{cor*}{Corollary}
\theoremstyle{definition}
  \newtheorem{rem}[thm]{Remark}

\theoremstyle{remark}
  
  \newtheorem*{defn*}{Definition}
  
  \newtheorem*{conjecture*}{Conjecture}
  
  \newtheorem*{note*}{Notation}
  \newtheorem{question}[thm]{Question}
  \newtheorem*{question*}{Question}

\title{Detecting motivic equivalences with motivic homology}
\author{David Hemminger}

\newcommand{\address}{{
  \small

  \textsc{UCLA Mathematics Department, Box 951555, Los Angeles, CA 90095-1555}\par\nopagebreak
  \texttt{dhemminger@math.ucla.edu}
}}

\begin{document}
\maketitle
Let $k$ be a field, let $R$ be a commutative ring, and assume the exponential characteristic of $k$ is invertible in $R$.  Let $\DM(k; R)$ denote Voevodsky's triangulated category of motives over $k$ with coefficients in $R$.

In a failed analogy with topology, motivic homology groups do not detect isomorphisms in $\DM(k; R)$ (see Section \ref{sec:background}).  However, it is often possible to work in a context partially agnostic to the base field $k$.  In this note, we prove a detection result for those circumstances.

\begin{thm}\label{thm:main}
Let $\varphi \colon M \to N$ be a morphism in $\DM(k; R)$.  Suppose that either
\begin{enumerate}[a)]
\item 
For every separable finitely generated field extension $F/k$, the induced map on motivic homology $\oH_*(M_F,R(*)) \to \oH_*(N_F,R(*))$ is an isomorphism, or
\item
Both $M$ and $N$ are compact, and for every separable finitely generated field extension $F/k$, the induced map on motivic cohomology $\oH^*(N_F,R(*)) \to \oH^*(M_F,R(*))$ is an isomorphism.
\end{enumerate}
Then $\varphi$ is an isomorphism.
\end{thm}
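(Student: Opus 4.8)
The strategy is to pass to the cone of $\varphi$ and detect its vanishing with a coniveau (Gersten) argument. First I would translate the two hypotheses: set $C:=\operatorname{cone}(\varphi)$, so that $\varphi$ is an isomorphism precisely when $C=0$. Base change $\DM(k;R)\to\DM(F;R)$ is triangulated and the functors $\oH_\ast(-,R(\ast))$, $\oH^\ast(-,R(\ast))$ are (co)homological, so the long exact sequences of the triangle $M_F\to N_F\to C_F$ show that in case (a) one has $\oH_\ast(C_F,R(\ast))=0$ for every separable finitely generated $F/k$, while in case (b) the object $C$ is compact (being the cone of a morphism of compact objects) and $\oH^\ast(C_F,R(\ast))=0$ for every such $F$. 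In case (b), $C$ is moreover dualizable — the compact objects of $\DM(k;R)$ form a rigid tensor subcategory because the exponential characteristic of $k$ is invertible in $R$ — and base change is symmetric monoidal, so $(C^\vee)_F\simeq(C_F)^\vee$ and hence $\oH^n(C_F,R(m))\cong\oH_{-n}((C^\vee)_F,R(-m))$ for all $m,n$; thus $C^\vee$ satisfies the condition in case (a). Since $C\simeq(C^\vee)^\vee$, it suffices to prove the single statement: if $D\in\DM(k;R)$ satisfies $\oH_\ast(D_F,R(\ast))=0$ for every separable finitely generated field extension $F/k$, then $D=0$.

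Next I would remove the word ``separable'' from that statement. Any finitely generated field extension $F/k$ decomposes as $k\subseteq\ell\subseteq F$ with $\ell/k$ separable and finitely generated and $F/\ell$ finite purely inseparable: choose a transcendence basis $T$ of $F/k$, so that $F/k(T)$ is finite, let $\ell$ be the separable closure of $k(T)$ in $F$, and use transitivity of separability for the tower $\ell/k(T)/k$. Since the exponential characteristic of $k$ is invertible in $R$, base change along the finite purely inseparable extension $F/\ell$ is an equivalence $\DM(\ell;R)\xrightarrow{\sim}\DM(F;R)$ compatible with Tate twists and shifts, so $\oH_\ast(D_F,R(\ast))\cong\oH_\ast(D_\ell,R(\ast))=0$. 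Hence $\oH_\ast(D_F,R(\ast))=0$ for \emph{every} finitely generated $F/k$.

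Now I would invoke that $\DM(k;R)$ is compactly generated by the objects $\oM(X)(a)[n]$ with $X\in\Sm/k$ and $a,n\in\Z$, so $D=0$ will follow once I show $\Hom_{\DM(k;R)}(\oM(X),D(b)[j])=0$ for all $X\in\Sm/k$ and all $b,j\in\Z$. Fix such an $X$, of Krull dimension $d$, and fix $b\in\Z$. These groups are the abutment of the motivic coniveau (Gersten) spectral sequence for the coefficient object $D(b)$ — the motivic instance of the Bloch--Ogus--Gabber formalism — whose $E_1$-page is, in column $p$, a direct sum over the points $x\in X^{(p)}$ of groups of the form $\Hom_{\DM(k(x);R)}(R,D_{k(x)}(c)[i])$, with twists and shifts $c,i$ dictated by purity (their precise values are immaterial here, the motive of a point being the unit). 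Each residue field $k(x)$ is finitely generated over $k$, so $\Hom_{\DM(k(x);R)}(R,D_{k(x)}(c)[i])=\oH_{-i}(D_{k(x)},R(-c))=0$ by the previous paragraph. Since $X$ is finite-dimensional, the spectral sequence is concentrated in columns $0\le p\le d$ and hence converges strongly in each total degree; as its entire $E_1$-page vanishes, so does the abutment. Therefore $\Hom_{\DM(k;R)}(\oM(X),D(b)[j])=0$ for all $X,b,j$, whence $D=0$, proving the theorem.

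The step that demands the most care is the construction of the coniveau spectral sequence, together with the purity identification of its $E_1$-page in terms of motivic (co)homology of residue fields, for the coefficient object $D(b)$, which is not assumed compact or bounded; here finite-dimensionality of $X$ is what makes the convergence issue disappear, so no such hypothesis on $D$ is needed. The remaining ingredients — compact generation of $\DM(k;R)$ by motives of smooth varieties and their Tate twists, rigidity of its compact objects, and invariance of $\DM(-;R)$ under finite purely inseparable extensions of the base field when the exponential characteristic is invertible in $R$ — I would take as standard.
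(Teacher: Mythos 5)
Your overall strategy is sound and genuinely different from the paper's: you stay inside $\DM(k;R)$ and test the cone against the compact generators $\oM(X)(a)[n]$ via a coniveau argument, whereas the paper transports the problem to $\SH(k)$ along the conservative forgetful functor and invokes Morel's unramified homotopy sheaves (Proposition \ref{prop:spectra-iso-detection}), together with a lemma identifying stalks of homotopy sheaves at generic points with motivic homology over function fields. Your preliminary reductions are fine: passing to the cone, the duality reduction of case (b) to case (a) using rigidity of compact objects (this is exactly the paper's reduction), and the removal of ``separable'' via invariance of $\DM(-;R)$ under purely inseparable extensions --- a step your route genuinely needs, since residue fields of non-generic points of a smooth $k$-variety need not be separable over $k$.

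The gap sits precisely in the step you flag: identifying the $E_1$-page of the coniveau spectral sequence with motivic homology of $D$ over residue fields is not an off-the-shelf instance of the Bloch--Ogus--Gabber formalism when the coefficient object $D(b)$ is arbitrary (non-compact). Two things are hidden. First, purity: to rewrite the local term at $x\in X^{(p)}$ you must shrink to a neighborhood in which $\overline{\{x\}}$ is smooth over $k$; this needs $k$ perfect (over an imperfect field an integral closed subvariety of a smooth variety can be nowhere smooth over $k$), so your argument requires a prior reduction to $k^{\mathrm{perf}}$ --- doable with the same purely-inseparable invariance you already use, plus a comparison of finitely generated extensions of $k^{\mathrm{perf}}$ with separable finitely generated extensions of $k$, but it must be carried out (the paper makes the analogous implicit reduction, since Morel works over a perfect field). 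Second, and more substantively, after purity the $E_1$-entry is a colimit $\colim_U \Hom_{\DM(k;R)}(\oM(\overline{\{x\}}\cap U)(c)[i],D(b)[j])$ over shrinking smooth models of $k(x)$, and identifying this with $\Hom_{\DM(k(x);R)}(R,D_{k(x)}(c')[i'])$ is a continuity statement for $\DM$ with a non-compact coefficient object. That statement is exactly the content of the key lemma in Section \ref{sec:proof} of the paper, proved there by reducing (via direct sums and triangles) to $\oM^c$ of smooth projective schemes and a cycle-level colimit for higher Chow groups; alternatively it can be deduced from continuity of $\DM(-;R)$ using that the unit $R$ is compact, that pullback functors preserve compact objects and all colimits, and that every object is a filtered colimit of compact objects. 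Either way, this is the heart of the proof and cannot simply be cited as standard; once you supply it, your argument goes through.
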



Note that if $X$ is a separated scheme of finite type over $k$, then the motive $\oM(X)$ of $X$ in $\DM(k;R)$ is compact, by \cite[Theorem 11.1.13]{CD19}.

The proof is easy: following a suggestion of Bachmann, we reduce to the fact (Proposition \ref{prop:spectra-iso-detection}) that a morphism in $\SH(k)$ which induces an isomorphism on homotopy sheaves evaluated on all separable finitely generated field extensions must be an isomorphism.  This follows immediately from Morel's work on unramified presheaves (see \cite{Morel12}).

Together with existing results, Theorem \ref{thm:main} detects isomorphisms between certain spaces in the pointed motivic homotopy category $\mathcal{H}(k)_*$.  Wickelgren-Williams showed that the functor 
$$\Sigma_{S^1}^\infty \colon \mathcal{H}(k)_*\to \mathcal{SH}^{S^1}(k)$$
is conservative when restricted to the subcategory of $A^1$-1-connected spaces (\cite[Corollary 2.23]{WW19}).  Bachmann proved that the functor
$$\Sigma_{\mathbb{G}_m}^\infty \colon \mathcal{SH}^{S^1}(k)_{\ge 0}\cap \mathcal{SH}^{S^1}(k)(n) \to \mathcal{SH}(k)$$
is conservative when $\operatorname{char}(k) = 0$ and $n = 1$ or $k$ is perfect and $n = 3$ (\cite[Corollary 4.15]{Bachmann20}), and Feld showed that $\Sigma_{\mathbb{G}_m}^\infty$ is conservative when $k$ is an infinite perfect field of characteristic not 2 (\cite[Theorem 6]{Feld20}).  Here $\mathcal{SH}^{S^1}(k)(n) \subseteq \mathcal{SH}^{S^1}(k)$ denotes the full subcategory generated under homotopy colimits by the objects $X_+ \wedge \mathbb{G}_m^{\wedge n}$, for $X$ a smooth scheme over $k$.  By another theorem of Bachmann, the functor
$$\oM \colon \mathcal{SH}^{c}(k)\to \DM(k;\Z)$$
is conservative when $k$ is a perfect field of finite 2-{\'e}tale cohomological dimension, where $\mathcal{SH}^c(k) \subseteq \mathcal{SH}(k)$ denotes the full subcategory of compact objects (\cite[Theorem 16]{Bachmann18}).  Using Bachmann's results, Totaro showed that if $k$ is a finitely generated field of characteristic zero, $X \in \mathcal{SH}(k)^c$, $\oM(X) = 0$ in $\DM(k,\Z)$, and $\oH_*(X(\R),\Z[1/2]) = 0$ for every embedding of $k$ into $\R$, then $X = 0$ (\cite[Theorem 6.1]{Totaro20}).

Finally, $\DM(k;\Z[1/p])$ is equivalent to the category with objects the same as $\DM(k;\Z)$ and hom-sets tensored by $\Z[1/p]$, by \cite[Proposition 11.1.5]{CD19}.  It follows that if $\mathcal{C}$ is a full subcategory of $\mathcal{H}(k)_*$ such that the composition of functors $\mathcal{C}\subseteq \mathcal{H}(k)_* \to \DM(k;\Z)$ is conservative, then the composition
$$\mathcal{H}(k)_*[1/p]\cap \mathcal{C} \subseteq \mathcal{H}(k)_* \to \DM(k;\Z) \to \DM(k;\Z[1/p])$$
is conservative.  Altogether, we have the following corollary. 

\begin{cor}\label{cor:main}
Let $\mathcal{H}(k)_*(n), \mathcal{H}(k)_{*,\ge 1}, \mathcal{H}(k)_*^c \subseteq \mathcal{H}(k)_*$ denote the subcategory generated under homotopy colimits by the objects $X_+\wedge \mathbb{G}_m^{\wedge n}$, for $X$ a smooth scheme over $k$; the subcategory of $\A^1$-1-connected spaces; and the subcategory of objects sent to compact objects in $\mathcal{SH}(k)$ under stabilization, respectively.  Let $k$ be a perfect field with exponential characteristic $p$.  Let $n = 1$ if $p = 1$, $n = 2$ if $p > 2$ and $k$ is infinite, and $n = 3$ otherwise.  Let $\varphi \colon X \to Y$ be a morphism in 
$$\mathcal{H}(k)_*(n)\cap \mathcal{H}(k)_{*,\ge 1} \cap \mathcal{H}(k)_*^c \cap \mathcal{H}(k)_*[1/p].$$
Suppose that either
\begin{enumerate}[a)]
\item 
$k$ has finite 2-{\'e}tale cohomological dimension, or
\item
$k$ is a finitely generated field of characteristic zero, and for every embedding of $k$ into $\R$, the induced map
$$\oH_*(X(\R),\Z[1/2]) \to \oH_*(Y(\R),\Z[1/2])$$
is an isomorphism.
\end{enumerate}
Suppose further that either
\begin{enumerate}[a)]
\item 
for every finitely generated field $F/k$, the induced map on motivic homology $\oH_*(X_F,\Z(*)) \to \oH_*(Y_F,\Z(*))$ is an isomorphism, or
\item
for every finitely generated field $F/k$, the induced map on motivic cohomology $\oH^*(Y_F,\Z(*)) \to \oH^*(X_F,\Z(*))$ is an isomorphism.
\end{enumerate}
Then $\varphi$ is an isomorphism in $\mathcal{H}(k)_*$.
\end{cor}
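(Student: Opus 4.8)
The plan is to push $\varphi$ up through the tower of conservative functors recalled above, apply Theorem \ref{thm:main} to the induced morphism of motives, and then pull the resulting isomorphism back down. All of the substantive inputs are quoted; the work is tracking which subcategory $\varphi$ lands in at each stage and reconciling the $\Z$-coefficient hypotheses of the corollary with the $\Z[1/p]$-coefficient hypotheses of Theorem \ref{thm:main}.

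\emph{Going up.} I would set $\mathcal{C} := \mathcal{H}(k)_*(n)\cap\mathcal{H}(k)_{*,\ge 1}\cap\mathcal{H}(k)_*^c$, so $\varphi\in\mathcal{C}\cap\mathcal{H}(k)_*[1/p]$. Since $\Sigma^\infty_{S^1}$ is a left adjoint commuting with $-\wedge\mathbb{G}_m^{\wedge n}$, it carries $\mathcal{H}(k)_*(n)$ into $\mathcal{SH}^{S^1}(k)(n)$ and $\A^1$-$1$-connected spaces into $\mathcal{SH}^{S^1}(k)_{\ge 0}$ (Morel's connectivity theorem, $k$ perfect), and by definition $\Sigma^\infty := \Sigma^\infty_{\mathbb{G}_m}\Sigma^\infty_{S^1}$ carries $\mathcal{H}(k)_*^c$ into $\mathcal{SH}(k)^c$. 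Moreover $\Sigma^\infty_{S^1}$ is conservative on $\mathcal{H}(k)_{*,\ge 1}$ by \cite[Corollary 2.23]{WW19}, and $\Sigma^\infty_{\mathbb{G}_m}$ is conservative on $\mathcal{SH}^{S^1}(k)_{\ge 0}\cap\mathcal{SH}^{S^1}(k)(n)$ for our choice of $n$: when $p=1$ we have $n=1$ and apply \cite[Corollary 4.15]{Bachmann20}; when $p>2$ and $k$ is infinite we have $n=2$ and $\operatorname{char}k=p\neq 2$, and apply \cite[Theorem 6]{Feld20}; otherwise we have $n=3$ and $k$ perfect, and apply \cite[Corollary 4.15]{Bachmann20}. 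Hence it will suffice to prove that $\Sigma^\infty\varphi$ is an isomorphism in $\mathcal{SH}(k)$.

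\emph{In $\DM$.} Write $\oM(X),\oM(Y)\in\DM(k;\Z)$ for the images of $X,Y$ under $\mathcal{H}(k)_*\to\DM(k;\Z)$; these are compact because $\Sigma^\infty X,\Sigma^\infty Y$ are compact in $\mathcal{SH}(k)$ and $\oM$ preserves compact objects (it is colimit-preserving and sends a set of compact generators to compact objects). Since $k$ is perfect, $p$ is invertible in $\Z[1/p]$ and every finitely generated field extension $F/k$ is separable; for such $F$ the motive $\oM(X)_F=\oM(X_F)$ is $[1/p]$-local because $X_F\in\mathcal{H}(F)_*[1/p]$, and combining this with \cite[Proposition 11.1.5]{CD19} identifies $\oH_*(\oM(X)_F,\Z[1/p](*))$ with $\oH_*(X_F,\Z(*))$ and $\oH^*(\oM(X)_F,\Z[1/p](*))$ with $\oH^*(X_F,\Z(*))$, and similarly for $Y$. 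Thus hypothesis (a), respectively (b), of the second dichotomy is exactly hypothesis (a), respectively (b), of Theorem \ref{thm:main} for $\oM(\varphi)\colon\oM(X)\to\oM(Y)$ in $\DM(k;\Z[1/p])$, so Theorem \ref{thm:main} gives that $\oM(\varphi)$ is an isomorphism there; and since $\oM(X),\oM(Y)$ are $[1/p]$-local, \cite[Proposition 11.1.5]{CD19} shows $\oM(\varphi)=\oM(\Sigma^\infty\varphi)$ is already an isomorphism in $\DM(k;\Z)$.

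\emph{Coming down.} Under hypothesis (a) of the first dichotomy I would conclude using \cite[Theorem 16]{Bachmann18}: $\oM$ is conservative on $\mathcal{SH}^c(k)$, so $\Sigma^\infty\varphi$ is an isomorphism. Under hypothesis (b) one has $\operatorname{char}k=0$, hence $p=1$ and $n=1$; I would set $Z:=\operatorname{cofib}(\Sigma^\infty\varphi)$, compact in $\mathcal{SH}(k)$, note that $\oM(Z)=\operatorname{cofib}(\oM(\Sigma^\infty\varphi))=0$, and observe that for each embedding $k\hookrightarrow\R$ real realization turns $\Sigma^\infty X\to\Sigma^\infty Y\to Z$ into a cofiber sequence $X(\R)\to Y(\R)\to Z(\R)$, whence $\oH_*(Z(\R),\Z[1/2])=0$ by the long exact sequence and the hypothesis; then \cite[Theorem 6.1]{Totaro20} gives $Z=0$, i.e.\ $\Sigma^\infty\varphi$ is an isomorphism. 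In either case $\Sigma^\infty\varphi$ is an isomorphism, and then $\varphi$ is an isomorphism by the conservativity established in the first step. The main obstacle, such as it is, lies in the second step: one must verify the compactness and $[1/p]$-locality that make each quoted theorem applicable, and match the coefficient conventions precisely so that the hypotheses of Theorem \ref{thm:main} are exactly those assumed here.
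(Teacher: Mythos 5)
Your proposal is correct and follows essentially the same route as the paper, which proves the corollary exactly by assembling the quoted conservativity results (Wickelgren--Williams, Bachmann/Feld, Bachmann's Theorem 16 or Totaro's Theorem 6.1), the $\Z[1/p]$-coefficient comparison via \cite[Proposition 11.1.5]{CD19}, and Theorem \ref{thm:main} applied in $\DM(k;\Z[1/p])$. Your bookkeeping of which subcategory each object lands in, the compactness of the motives, and the passage between $\Z$ and $\Z[1/p]$ coefficients (where simply tensoring the hypothesized isomorphisms with $\Z[1/p]$ would also suffice) matches the paper's intended argument.
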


\begin{rem}
Suppose $X$ is a smooth separated scheme of finite type over $k$.  Then 
$$X_+ \wedge S^{m+n,n} \in \mathcal{H}(k)_*(n)\cap \mathcal{H}(k)_{*,\ge 1} \cap \mathcal{H}(k)_*^c,$$
where $m = 0$ if $X$ is $\A^1$-1-connected, $m = 1$ if $X$ is $\A^1$-connected, and $m = 2$ otherwise.  Localization away from a prime $p$ is well-behaved on $\A^1$-1-connected spaces; see \cite{AFH19}.  Perfect fields of finite 2-{\'e}tale cohomological dimension include algebraically closed fields, finite fields, and totally imaginary number fields.
\end{rem}

\section{The derived category of motives}\label{sec:background}
We recall here a few basic facts about Voevodsky's derived category of motives $\DM(k;R)$.  A more thorough, but still concise, summary of basic properties of $\DM(k;R)$ is \cite[Section 5]{Totaro16}.  Further references on $\DM(k;R)$ are \cite{Voevodsky00} and \cite{CD19}.

The category $\DM(k;R)$ is defined in \cite[Definition 11.1.1]{CD19}.  It is tensor triangulated with tensor unit $R$, and it has arbitrary (not necessarily finite) direct sums.  The motive and the motive with compact support define covariant functors $\oM$ and $\oM^c$ from separated schemes of finite type over $k$ to $\DM(k;R)$.  We write $R(i)[2i]$ for the \emph{Tate motive} $\oM^c(A^i_k)$.  The object $R(1)$ is invertible in $\DM(k;R)$; we define $R(-i) = R(i)^*$.  For $M \in \DM(k;R)$, we write $\oM(i) = M \otimes R(i)$.

For any motive $M \in \DM(k)$, the functor $M \otimes -$ has a right adjoint $\underline{\Hom}(M,-)$.  By \cite[Lemma 5.5]{Totaro16}, $\underline{\Hom}(M,N) \cong M^* \otimes N$ and $(M^*)^* \cong M$ whenever $M$ is compact.  If $X$ is a smooth scheme over $k$ of pure dimension $n$, a version of Poincar{\'e} duality says that $\oM(X)^* \cong \oM^c(X)(-n)[-2n]$.

For $M \in \DM(k;R)$, the motivic homology and motivic cohomology of $M$ are defined by
$$\oH_j(M,R(i)) = \Hom(R(i)[j],M)$$
and
$$\oH^j(M,R(i)) = \Hom(M,R(i)[j]).$$
Motivic (co)homology does not detect isomorphisms in $\DM(k;R)$, by the following argument (see also \cite[Section 7]{Totaro16}).  Let $\DMT(k;R)$ denote the localizing subcategory of mixed Tate motives, i.e.\ the smallest localizing subcategory containing the motives $R(i)$ for all integers $i$.  Let $C \colon \DM(k;R) \to \DM(k;R)$ denote the composition of the inclusion $\DMT(k;R) \to \DM(k;R)$ with its right adjoint.  For $M \in \DM(k;R)$, the counit morphism $C(M) \to M$ always induces an isomorphism on motivic homology, but is only an isomorphism if $M \in \DMT(k;R)$.

Let $HR\in \SH(k)$ denote the Eilenberg-Maclane spectrum of $R$.  The category $HR\operatorname{-Mod}$ of $HR$-module spectra in $\SH(k)$ is equivalent to $\DM(k;R)$ by \cite[Theorem 1.1]{RO08}.  There is a functor $\SH(k) \to \DM(k;R)$ that can be viewed as smashing with $HR$; it is left adjoint to the forgetful functor $H\colon \DM(k;R) \to \SH(k)$.  The functor $\oM \colon \Sm_k \to \DM(k;R)$ factors through $\SH(k)$ as $(HR\wedge -) \circ \Sigma^\infty$.

Motivic homology does detect isomorphisms in $\DMT(k;R)$.  It is also conjectured (see \cite[1.2(A)]{Beilinson02}) that for $l$ a prime invertible in $k$ the {\'e}tale realization $\DM(k;\bQ) \to \mathcal{D}(\bQ_l)$ is conservative after restricting to the subcategory of compact objects in $\DM(k;\bQ)$.  One could also ask the following related question.

\begin{question}
For a prime $l$ invertible in $k$, is the {\'e}tale realization functor $\DM(k;\F_l)\to \mathcal{D}(\F_l)$ conservative when restricted to compact objects?  Equivalently, is there a nonzero compact motive $M \in \DM(k;\F_l)$ with zero {\'e}tale cohomology?
\end{question}

\section{Proof of Theorem \ref{thm:main}}\label{sec:proof}
We reduce the theorem to the following proposition, which follows immediately from work of Morel on unramified presheaves (see \cite{Morel12}).

\begin{prop}\label{prop:spectra-iso-detection}
Suppose that for a morphism of spectra $\psi\colon E_1 \to E_2$ in $\SH(k)$ the induced map $\pi_{*,*}(E_1)(F) \to \pi_{*,*}(E_2)(F)$ is an isomorphism for all separable finitely generated field extensions $F/k$.  Then $\psi$ is an isomorphism in $\SH(k)$.
\end{prop}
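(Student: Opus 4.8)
The plan is to reduce the statement to the classical fact that an $\mathbb{A}^1$-invariant sheaf with transfers (or more generally a strictly $\mathbb{A}^1$-invariant Nisnevich sheaf) is determined by its values on finitely generated separable field extensions, which is the content of Morel's theory of unramified sheaves. Concretely, I would let $E$ denote the cofiber of $\psi$ in $\SH(k)$, so it suffices to show that $E = 0$. Since $\SH(k)$ is generated under homotopy colimits by the suspension spectra $\Sigma^\infty X_+$ for $X$ smooth over $k$ (together with $\mathbb{G}_m$-desuspensions), and $\pi_{*,*}(-)$ is a homology theory taking cofiber sequences to long exact sequences, the hypothesis gives that $\pi_{*,*}(E)(F) = 0$ for every separable finitely generated $F/k$; I want to conclude $E = 0$, equivalently $\pi_{*,*}(E) = 0$ as a homotopy sheaf, equivalently all the homotopy sheaves $\underline{\pi}_{a,b}(E)$ vanish.

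The key step is then the following: for any $E \in \SH(k)$, the homotopy sheaf $\underline{\pi}_{a,b}(E)$ is a strictly $\mathbb{A}^1$-invariant Nisnevich sheaf of abelian groups on $\Sm_k$, and such a sheaf is \emph{unramified} in the sense of Morel. For an unramified sheaf $\mathcal{F}$, the restriction map $\mathcal{F}(X) \to \prod_{x} \mathcal{F}(k(x))$ over, say, the generic points $x$ of an irreducible smooth $X$ is injective, and more to the point $\mathcal{F}$ is entirely reconstructed from its values on the function fields of smooth $k$-schemes (i.e.\ on finitely generated separable field extensions of $k$) together with the residue/specialization data. Hence if $\mathcal{F}(F) = 0$ for all such $F$, then $\mathcal{F}(X) = 0$ for all smooth $X$, so $\mathcal{F} = 0$. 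Applying this to each $\underline{\pi}_{a,b}(E)$ — once we know $\pi_{a,b}(E)(F) = 0$ for all finitely generated separable $F/k$, which follows from the vanishing of $\pi_{*,*}(E)(F)$ — gives $\underline{\pi}_{a,b}(E) = 0$ for all $a, b$, hence $E = 0$ and $\psi$ is an isomorphism.

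There is one subtlety to address carefully: the hypothesis is stated in terms of the \emph{bigraded homotopy groups} $\pi_{*,*}(E)(F) = \colim \,[\text{spheres}, E]$ evaluated at $F$, rather than directly in terms of the homotopy \emph{sheaves} $\underline{\pi}_{a,b}(E)$ evaluated at $\Spec F$. Over a perfect field these agree on the level of stalks at generic points because the homotopy sheaves are already strictly $\mathbb{A}^1$-invariant and their sections over a field are computed by the corresponding maps in $\SH$ of that field (compatibility of $\underline{\pi}_{a,b}$ with the base-change functor $\SH(k) \to \SH(F)$ on the relevant generators); over a general field one first reduces to the perfect closure, using that $\SH(k) \to \SH(k^{\mathrm{perf}})$ is (after inverting the exponential characteristic, which is harmless for the purpose of detecting isomorphisms, or even integrally by a theorem of Elmanto–Khan) conservative and identifies homotopy sheaves appropriately. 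I would spell out that $\pi_{a,b}(E)(F) = \underline{\pi}_{a,b}(E_F)(\Spec F) = \underline{\pi}_{a,b}(E)(\Spec F)$ for $F/k$ finitely generated separable, so the hypothesis is exactly the vanishing of the stalks of $\underline{\pi}_{a,b}(E)$ at all such $F$.

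The main obstacle I expect is precisely this last bookkeeping — matching the naive bigraded homotopy groups over varying fields with the values of the homotopy sheaves, and handling the non-perfect base field — rather than the core geometric input, which is a black box from Morel: an unramified sheaf vanishing on all finitely generated separable field extensions of $k$ is zero. Once that matching is in place, the argument is a one-line application of the generation of $\SH(k)$ by smooth schemes, the long exact sequence in $\pi_{*,*}$ for the cofiber of $\psi$, and Morel's theorem.
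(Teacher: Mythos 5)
Your overall strategy is the same as the paper's: pass to the cofiber, use the long exact sequence to see that its bigraded homotopy sheaves vanish on all separable finitely generated extensions, and invoke Morel's unramifiedness to conclude that a homotopy sheaf vanishing at all generic points vanishes, hence the cofiber is zero. Two points of divergence are worth noting. The ``subtlety'' you devote a paragraph to is handled in the paper by convention: the value of a presheaf on $\Sm_k$ at $\Spec F$, for $F/k$ separable finitely generated, is by definition the filtered colimit over smooth models, i.e.\ the stalk at the generic point, so no base change to $\SH(F)$ or essentially-smooth-base-change input is needed. Also, only the injectivity $\mathcal{F}(X)\hookrightarrow \mathcal{F}(k(X))$ for unramified $\mathcal{F}$ is used, not the full reconstruction of $\mathcal{F}$ from fields together with specialization data.

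There is, however, a genuine gap in how you establish unramifiedness of the stable homotopy sheaves over an arbitrary base field. You go through strict $\A^1$-invariance, which in Morel's work requires $k$ perfect, and your proposed repair for imperfect $k$ does not work as stated. Inverting the exponential characteristic $p$ is not ``harmless for detecting isomorphisms'': from the vanishing of the ($p$-inverted) cofiber you can only conclude that $\psi[1/p]$ is an isomorphism, not $\psi$ itself. The Elmanto--Khan theorem you cite is an equivalence $\SH(k)[1/p]\simeq \SH(k^{\mathrm{perf}})[1/p]$; it does not provide integral conservativity of $\SH(k)\to\SH(k^{\mathrm{perf}})$. Moreover, even after inverting $p$, finitely generated extensions of $k^{\mathrm{perf}}$ are not separable finitely generated extensions of $k$, so transporting your hypothesis across this base change requires an additional continuity argument that you have not supplied. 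The paper sidesteps all of this by writing $\pi_{i,j}(E)\cong \pi_2^{\A^1}(\Omega^\infty(S^{2-i,-j}\wedge E))$ and quoting Morel's unramifiedness of the unstable sheaves $\pi_n^{\A^1}$ for $n\ge 2$ (Theorem 9, Example 1.3, and Remark 17 of Morel's book), which applies directly over the given base field. As written, your argument is complete only when $k$ is perfect; to cover the general case you should either adopt the paper's unstable reduction or supply a correct integral descent/continuity argument for the passage to $k^{\mathrm{perf}}$.
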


\begin{proof}
A presheaf of sets $S$ on $\Sm_k$ is \emph{unramified} if for any $X \in \Sm_k$, $S(X) \cong \prod S(X_\alpha)$, where the $X_\alpha$ are the irreducible components of $X$, and for any dense open subscheme $U \subset X$, the restriction map $S(X) \to S(U)$ is injective and moreover an isomorphism if $X-U$ has codimension at least 2 everywhere.  For a space $Y \in \mathcal{H}(k)_*$, the unstable homotopy sheaf $\pi_{n}^{\A^1}(Y)$ is unramified for $n\ge 2$, by \cite[Theorem 9, Example 1.3]{Morel12} (see also \cite[Remark 17]{Morel12}).

For $E \in \SH(k)$, 
$$\pi_{i,j}(E)\cong \pi_{2}^{\A^1}(\Omega^\infty(S^{2-i,-j}\wedge E)),$$
so it is unramified.  In particular, for $X$ a smooth variety over $k$, $\pi_{i,j}(E)(X)$ injects into $\pi_{i,j}(E)(\Spec(k(X)))$.  (If $S$ is a presheaf on $\Sm_k$ and $Y \cong \lim_\alpha Y_\alpha$ in the category of schemes over $k$ with $Y_\alpha \in \Sm_k$, then $S(Y) = \colim_\alpha(Y_\alpha)$ by definition.)   It follows that the unstable homotopy sheaves of the cone of $\psi$ vanish, so $\psi$ is an isomorphism.
\end{proof}

If $M \in \DM(k;R)$ is compact, then $\Hom(M,R(i)[j]) \cong \Hom(R(-i)[-j],M^*)$ and $(M^*)^* \cong M$, so it suffices to prove the theorem with the assumption (a).  The functor $H$ is conservative, so it suffices to show that $HC$ is zero, where $C$ is the cone of $\varphi$.  By the hypotheses of the theorem we have that $\oH_*(C_{k(X)},R(*)) = 0$, so the result follows from Proposition \ref{prop:spectra-iso-detection} and the following lemma.

\begin{lem}
Let $M \in \DM(k;R)$, and let $X$ be a smooth variety over $k$.  Then 
$$\pi_{i,j}(HM)(\Spec(k(X)))\cong \oH_{i+2n}(M_{k(X)},R(j+n)),$$
where $n = \dim X$.
\end{lem}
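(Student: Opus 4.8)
My plan is to transport the computation from $\SH(k)$ into $\DM(k;R)$ using the free--forgetful adjunction, produce the twist $R(n)$ and the shift $[2n]$ by invoking Poincar\'e duality for the smooth variety $U$, and then recognize the resulting colimit over the open subsets of $X$ as the motivic homology of $M_{k(X)}$ by a continuity argument. For the first step, note that the presheaf $U\mapsto \Hom_{\SH(k)}(\Sigma^\infty_+ U\wedge S^{i,j},\,HM)$ on $\Sm_k$ is already a Nisnevich sheaf, since every object of $\SH(k)$ satisfies Nisnevich descent; hence it coincides with $\pi_{i,j}(HM)$, and, by the convention used in the proof of Proposition~\ref{prop:spectra-iso-detection}, its value at $\Spec k(X)=\lim_U U$ (the limit taken over affine open $U\subseteq X$) is $\colim_U \Hom_{\SH(k)}(\Sigma^\infty_+ U\wedge S^{i,j},\,HM)$. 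Applying the adjunction $(HR\wedge-)\dashv H$ together with $HR\wedge\Sigma^\infty_+ U\cong\oM(U)$ and $HR\wedge S^{i,j}\cong R(j)[i]$ rewrites each term as $\Hom_{\DM(k;R)}(\oM(U)(j)[i],\,M)$, so
$$\pi_{i,j}(HM)(\Spec k(X))\;\cong\;\colim_U \Hom_{\DM(k;R)}(\oM(U)(j)[i],\,M).$$

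Next I would bring in Poincar\'e duality. Each $U$ is smooth of dimension $n=\dim X$, and $\oM^c(U)$ is compact --- it is the motive with compact support of a scheme of finite type over $k$ --- hence reflexive and strongly dualizable, with $\underline{\Hom}(\oM^c(U)^*,-)\cong\oM^c(U)\otimes-$. Dualizing $\oM(U)^*\cong\oM^c(U)(-n)[-2n]$ gives $\oM(U)\cong\oM^c(U)^*(n)[2n]$, so
$$\Hom_{\DM(k;R)}(\oM(U)(j)[i],\,M)\;\cong\;\Hom\big(\oM^c(U)^*(n+j)[2n+i],\,M\big)\;\cong\;\Hom\big(R(n+j)[2n+i],\,\oM^c(U)\otimes M\big),$$
the right-hand side being $\oH_{i+2n}(\oM^c(U)\otimes M,\,R(j+n))$. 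Since $R(n+j)[2n+i]$ is compact the colimit over $U$ passes through the last $\Hom$, so it remains to identify $\colim_U\big(\oM^c(U)\otimes M\big)$ --- equivalently, $\colim_U\Hom\big(R(n+j)[2n+i],\,\oM^c(U)\otimes M\big)$ --- with $\oH_{i+2n}(M_{k(X)},\,R(j+n))$.

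This last identification is the heart of the matter, and the step where I expect the real work to lie. I would prove it using continuity of $\DM$ along filtered limits of schemes with affine transition maps: writing $a\colon\Spec k(X)\to\Spec k$ as the limit of the smooth structure maps $a_U\colon U\to\Spec k$, continuity identifies $\Hom_{\DM(k(X);R)}(K_{k(X)},L_{k(X)})$ with $\colim_U\Hom_{\DM(U;R)}(K|_U,L|_U)$ for $K$ compact; combined with the adjunction $a_{U\#}\dashv a_U^{*}$ --- or, dually, $a_{U!}\dashv a_U^{!}$ with the purity identification $a_U^{!}R\cong R(n)[2n]$, which is what rewrites $\oM^c(U)\otimes M$ in terms of $a_U$ --- this turns the colimit into the asserted motivic homology group, the twist $R(n)[2n]$ being exactly the one forced by Poincar\'e duality and by the dualizing object of $U$ over $k$. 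The points to watch are that the transition maps in the system $\{\oM^c(U)\}$ have the right variance (for $U'\subseteq U$ the map $\oM^c(U)\to\oM^c(U')$ is Poincar\'e-dual to $\oM(U')\to\oM(U)$), that continuity applies with the not-necessarily-compact object $M$ in the second slot, and that the six-functor bookkeeping identifying $\oM^c(U)\otimes M$ with the restriction of $M$ along $a_U$ is normalized consistently with the stated form of Poincar\'e duality. Everything preceding this step is formal.
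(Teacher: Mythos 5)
Your first two steps are essentially the paper's, but the justification of the very first one is wrong as stated: the presheaf $U\mapsto \Hom_{\SH(k)}(\Sigma^\infty_+U\wedge S^{i,j},HM)$ is \emph{not} a Nisnevich sheaf merely because $HM$ satisfies Nisnevich descent --- descent is a homotopy-limit statement, stable homotopy classes do not commute with homotopy limits, and $\pi_{i,j}(HM)$ is by definition the sheafification of this presheaf. What is true, and what the paper actually uses, is that sheafification does not change stalks: the value at $\Spec k(X)=\lim_U U$ is by convention the filtered colimit over dense (affine) opens, i.e.\ a stalk at the generic point, so the identification $\pi_{i,j}(HM)(\Spec k(X))\cong\colim_U\Hom_{\DM(k;R)}(\oM(U)(j)[i],M)$ survives, just not for the reason you give. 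The Poincar\'e duality manipulation is then the same as the paper's (your indexing here is the one consistent with the statement; the paper's displayed formula transposes $i$ and $j$).

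The genuine gap is the last step, which you yourself call the heart of the matter and do not carry out: the identification of $\colim_U\Hom\bigl(R(n+j)[2n+i],\oM^c(U)\otimes M\bigr)$ with motivic homology over $k(X)$ is only asserted, with the real difficulties relegated to ``points to watch''. The paper does this step by a different route: the property in question is stable under arbitrary direct sums (because $f^*$ and $\oM^c(U)\otimes-$ are left adjoints and $R(i)[j]$ is compact) and under triangles, so it reduces to $M=\oM^c(Y)$ with $Y$ smooth projective, identifies both sides with higher Chow groups, and checks the isomorphism at the level of cycle complexes. Your continuity route can be made to work and is arguably cleaner, but you must handle the non-compactness of $M$ --- either by citing continuity for $\DM(-;R)$ from \cite{CD19} in a form allowing an arbitrary object in the second variable, or by reducing to compact $M$ via exactly the direct-sum-and-triangle d\'evissage the paper uses --- and you must actually do the six-functor bookkeeping, which does not give what you say it gives. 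The cleanest version bypasses $\oM^c$ entirely: by the $(a_{U\#},a_U^*)$ adjunction, $\Hom_k(\oM(U)(j)[i],M)\cong\Hom_U(R(j)[i],a_U^*M)$, and continuity then yields $\colim_U\Hom_U(R(j)[i],a_U^*M)\cong\Hom_{k(X)}(R(j)[i],M_{k(X)})=\oH_i(M_{k(X)},R(j))$; the purity twist $a_U^!\cong a_U^*(n)[2n]$ exactly cancels the twist you introduced via Poincar\'e duality, so no $(n)[2n]$ survives. In particular the twisted target $\oH_{i+2n}(M_{k(X)},R(j+n))$ you promise to hit cannot come out of your own bookkeeping: for $M=R$ and $i=j=0$ the left-hand side is $R$, while $\oH_{2n}(R_{k(X)},R(n))=0$ for $n\ge 1$. (The shift in the printed statement is a slip, harmless downstream since only vanishing in all bidegrees is used; but your confidence that the $(n)[2n]$ is ``exactly the one forced by Poincar\'e duality'' is a symptom of this step not having been carried out.)
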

\begin{proof}
We have that 
$$\pi_{i,j}(HM)(\Spec(k(X))) = \colim_S \pi_{i,j}(HM)(X - S)$$
by definition, where the colimit ranges over all closed subschemes $S\subsetneq X$.  Sheafification does not change values on stalks, so by Poincar{\'e} duality and the fact that $\oM(X-S)$ is compact, 
$$\colim_S \pi_{i,j}(HM)(X - S)\cong \colim_S \oH_{j+2n}(\oM^c(X-S)\otimes M,R(i+n)).$$
Thus it suffices to show that
$$\colim_S \oH_*(\oM^c(X - S)\otimes M,R(*))\cong \oH_*(M_{k(X)},R(*))$$

The property of $M$ that both sides are isomorphic is preserved under arbitrary direct sums.  Indeed, if $f \colon \Spec(k(X)) \to \Spec(k)$ is the map induced by the field extension, then $M_{k(X)} = f^*M$.  For any morphism of schemes, the associated pullback functor on derived categories of motives has a right adjoint (\cite[Theorem B.1]{CD19}).  Since $f^*$ and $\oM^c(X - S) \otimes -$ are left adjoints, they commute with arbitrary direct sums.  Motivic homology commutes with arbitrary direct sums because its representing objects $R(i)[j]$ are compact.
    
Moreover, if two motives in an exact triangle satisfy the property, then the third does as well, by the long exact sequence in motivic homology for an exact triangle.  Thus it suffices to prove the lemma when $M = \oM^c(Y)$, where $Y$ is a smooth projective scheme over $k$.  In that case, $M_{k(X)} = \oM^c(Y_{k(X)})$, and the motivic homology groups in the lemma are isomorphic to higher Chow groups by \cite[Proposition 4.2.9]{Voevodsky00}, \cite[Theorem 5.3.14]{Kelly17}, and \cite[Proposition 8.1]{CD15}.  Thus it suffices to show that
\begin{equation}\label{eq:higher-chow}\colim_S \CH^*(Y\times_k (X - S),*;R) \cong \CH^*(Y_{k(X)},*;R)\end{equation}
In fact, (\ref{eq:higher-chow}) holds at the level of cycles:  Let 
$$\Delta^n = \Spec\left(k[x_0,\ldots,x_n]/\left(\sum_{i=0}^{n}x_i = 0\right)\right).$$
For every subset $\{i_0,\ldots,i_m\} \subseteq \{0,\ldots,n\}$, there is an associated {\it face} $\Delta^m \subseteq \Delta^n$.  For a smooth scheme $T$, let $z^*(T,n)$ denote the subgroup of algebraic cycles in $T \times \Delta^n$ generated by the subvarieties which intersect $T \times \Delta^m$ in the expected dimension for each face $\Delta^m \subseteq \Delta^n$.  The higher Chow groups $\CH^*(T,*;R)$ are the homology groups of a complex
$$\ldots \to z^*(T,2)\otimes R \to z^*(T,1)\otimes R \to z^*(T,0)\otimes R \to 0.$$
The isomorphism (\ref{eq:higher-chow}) follows from the fact that 
$$\colim z^*(Y\times_k(X - S),n) \cong z^*(Y_{k(X)},n)$$
for all $n$.
\end{proof}

{\it Acknowledgements.}  I thank Burt Totaro for suggesting something like Theorem \ref{thm:main} might be true and for pointing me to \cite{Totaro16}.  I thank Tom Bachmann for suggesting the use of Morel's work on unramified sheaves, which shortened the proof of Theorem \ref{thm:main} from a prior version.  This work was partially supported by National Science Foundation grant DMS-1701237.

\bibliography{motivic-detection-references}
\bibliographystyle{abbrv}

\address
\end{document}